\documentclass[12pt]{amsart}

\usepackage{psfrag}
\usepackage{color}
\usepackage{tikz}
\usetikzlibrary{matrix,arrows}
\usepackage{graphicx,graphics}
\usepackage{fullpage,amssymb,amsfonts,amsmath,amstext,amsthm,amscd,verbatim,enumerate}
\usepackage[T1]{fontenc}

\begin{document}

\newtheorem{theorem}{Theorem}[section]
\newtheorem{result}[theorem]{Result}
\newtheorem{fact}[theorem]{Fact}
\newtheorem{example}[theorem]{Example}
\newtheorem{conjecture}[theorem]{Conjecture}
\newtheorem{lemma}[theorem]{Lemma}
\newtheorem{proposition}[theorem]{Proposition}
\newtheorem{corollary}[theorem]{Corollary}
\newtheorem{facts}[theorem]{Facts}
\newtheorem{props}[theorem]{Properties}
\newtheorem*{thmA}{Theorem A}
\newtheorem{ex}[theorem]{Example}
\theoremstyle{definition}
\newtheorem{definition}[theorem]{Definition}
\newtheorem{remark}[theorem]{Remark}
\newtheorem*{defna}{Definition}

\newcommand{\notes} {\noindent \textbf{Notes.  }}
\newcommand{\note} {\noindent \textbf{Note.  }}
\newcommand{\defn} {\noindent \textbf{Definition.  }}
\newcommand{\defns} {\noindent \textbf{Definitions.  }}
\newcommand{\x}{{\bf x}}
\newcommand{\z}{{\bf z}}
\newcommand{\B}{{\bf b}}
\newcommand{\V}{{\bf v}}
\newcommand{\T}{\mathbb{T}}
\newcommand{\Z}{\mathbb{Z}}
\newcommand{\Hp}{\mathbb{H}}
\newcommand{\D}{\mathbb{D}}
\newcommand{\R}{\mathbb{R}}
\newcommand{\N}{\mathbb{N}}
\renewcommand{\B}{\mathbb{B}}
\newcommand{\C}{\mathbb{C}}
\newcommand{\ft}{\widetilde{f}}
\newcommand{\dt}{{\mathrm{det }\;}}
 \newcommand{\adj}{{\mathrm{adj}\;}}
 \newcommand{\0}{{\bf O}}
 \newcommand{\av}{\arrowvert}
 \newcommand{\zbar}{\overline{z}}
 \newcommand{\xbar}{\overline{X}}
 \newcommand{\htt}{\widetilde{h}}
\newcommand{\ty}{\mathcal{T}}
\renewcommand\Re{\operatorname{Re}}
\renewcommand\Im{\operatorname{Im}}
\newcommand{\tr}{\operatorname{Tr}}

\newcommand{\ds}{\displaystyle}
\numberwithin{equation}{section}

\renewcommand{\theenumi}{(\roman{enumi})}
\renewcommand{\labelenumi}{\theenumi}

\title{Epicycloids and Blaschke products}
\date{\today}
\author{Chunlei Cao, Alastair Fletcher, \and Zhuan Ye}
\footnotetext[1]{{\it 2010 Mathematics Subject Classification} Primary 30D05, Secondary 37F10, 37F45.}
\footnotetext[2]{The first author was supported by China Scholarship Council and Beijing Institute of Technology.}

\begin{abstract}
It is well known that the bounding curve of the central hyperbolic component of the Multibrot set in the parameter space of unicritical degree $d$ polynomials is an epicycloid with $d-1$ cusps. The interior of the epicycloid gives the polynomials of the form $z^d+c$ which have an attracting fixed point. We prove an analogous result for unicritical Blaschke products: in the parameter space of degree $d$ unicritical Blaschke products, the parabolic functions are parameterized by an epicycloid with $d-1$ cusps and inside this epicycloid are the parameters which give rise to elliptic functions having an attracting fixed point in the unit disk. We further study in more detail the case when $d=2$ in which every Blaschke product is unicritical in the unit disk.
\end{abstract}

\maketitle

\section{Introduction}

\subsection{Unicritical polynomials}

Complex dynamics deals with the iteration of holomorphic and meromorphic functions. Very often whole families of mappings are considered and these families are given by parameterization. For example, Douady and Hubbard initiated the current activity of research in complex dynamics in the 1980s by a systematic study of quadratic polynomials of the form $f_c(z)=z^2+c$. This family is parameterized by the complex variable $c$, and moreover the dynamics of $f_c$ depends crucially on $c$: if $c$ lies in the Mandelbrot set $\mathcal{M}$, then the Julia set of $f_c$ is connected whereas otherwise the Julia set of $f_c$ is totally disconnected. For the standard definitions in complex dynamics, such as Julia set and Fatou set, we refer to, for example, \cite{CG}.

A polynomial of degree $d$ with exactly one critical point is called unicritical. By conjugating the polynomial, we may move the critical point to $0$ and obtain a function of the form $g_c(z) = z^d+c$. The structure of the Julia set depends on whether or not the critical point lies in the escaping set $I(g_c)$ and this dichotomy gives rise to the Multibrot set $\mathcal{M}_d$. The parameter $c$ is in $\mathcal{M}_d$ if and only if $0\notin I(g_c)$ if and only if $J(f_c)$ is connected, and otherwise $J(f_c)$ is totally disconnected.

The central hyperbolic component of the Multibrot set $\mathcal{M}_d$ consists of parameters for which $g_c$ has an attracting fixed point. Clearly $c=0$ is always in this component. The following result is well-known, see for example \cite[p.156]{BF}.

\begin{proposition}
\label{prop:multibrot}
The boundary of the central hyperbolic component of $\mathcal{M}_d$ is an epicycloid with $d-1$ cusps.
\end{proposition}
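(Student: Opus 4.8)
The plan is to describe the boundary via the fixed points of $g_c(z)=z^d+c$ and their multipliers. A point $z$ is fixed by $g_c$ exactly when $c=z-z^d$, and the multiplier of $g_c$ there is $\lambda=g_c'(z)=dz^{d-1}$. Since the central hyperbolic component $H$ is the set of parameters for which $g_c$ has an attracting fixed point, i.e. one with $|\lambda|<1$, its boundary should be the locus where such a fixed point becomes neutral, $|\lambda|=1$. So the first step is to establish that $\partial H$ is precisely this neutral locus; this rests on the standard structure theory of hyperbolic components (the multiplier gives a biholomorphism of $H$ onto $\D$ that extends continuously to the boundary), together with the observation that at $c=0$ the map $z^d$ has the single attracting fixed point $0$, which persists as the unique attracting fixed point throughout $H$.

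Granting this, the computation is short. The neutral condition $|dz^{d-1}|=1$ is equivalent to $|z|=d^{-1/(d-1)}$, so I parameterize the neutral fixed points by $z=d^{-1/(d-1)}e^{i\phi}$ with $\phi\in[0,2\pi)$ and read off the associated parameter. Using $z^{d-1}=\tfrac1d e^{i(d-1)\phi}$ gives
\[
c(\phi)=z-z^d=z\bigl(1-z^{d-1}\bigr)=d^{-d/(d-1)}\bigl(d\,e^{i\phi}-e^{id\phi}\bigr).
\]
This is exactly the standard parameterization $\rho\bigl(d\,e^{i\phi}-e^{id\phi}\bigr)$, with $\rho=d^{-d/(d-1)}$, of the epicycloid traced by a circle of radius $\rho$ rolling on the outside of a fixed circle of radius $(d-1)\rho$. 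To confirm the cusp count I would differentiate, obtaining $c'(\phi)=i\,d^{-1/(d-1)}\bigl(e^{i\phi}-e^{id\phi}\bigr)$, which vanishes precisely when $e^{i(d-1)\phi}=1$, i.e. at $\phi=2\pi k/(d-1)$ for $k=0,\dots,d-2$: these $d-1$ points are the cusps. The identity $c(\phi+2\pi/(d-1))=e^{2\pi i/(d-1)}c(\phi)$ exhibits the expected $(d-1)$-fold rotational symmetry and shows that the $d-1$ branches of the root $z^{d-1}=\lambda/d$ together sweep out the whole curve exactly once.

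The genuinely nontrivial part, and the step I expect to be the main obstacle, is not this calculation but the topological bookkeeping around it: verifying that the neutral locus is a simple closed curve (an epicycloid has cusps but no self-intersections), that $c=0$ lies in its interior, and hence that this curve is exactly $\partial H$ rather than merely containing or being contained in it. I would handle this by checking that $\phi\mapsto c(\phi)$ is injective on $[0,2\pi)$ and that the restriction of $z\mapsto z-z^d$ to the disk $|z|<d^{-1/(d-1)}$ is a homeomorphism onto the bounded complementary component, so that the attracting region in the $z$-plane corresponds cleanly to the interior of the epicycloid in the $c$-plane. For $d=2$ this recovers the familiar main cardioid $c=\tfrac12 e^{i\phi}-\tfrac14 e^{2i\phi}$ of the Mandelbrot set, a useful check on the normalization.
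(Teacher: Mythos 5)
Your proposal is correct and follows essentially the same route as the paper: parameterize the neutral fixed points by $z_0=d^{-1/(d-1)}e^{i\phi}$, compute $c=z_0-z_0^d=d^{-d/(d-1)}(d e^{i\phi}-e^{id\phi})$, and recognize the epicycloid. Your additional checks (the cusp count via $c'(\phi)=0$, the rotational symmetry, and the topological identification of the neutral locus with $\partial H$) go beyond the paper, which disposes of that last point with a one-line appeal to continuity, and they are welcome rather than problematic.
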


The only proof we were able to find of this result is in a paper of Geum and Kim \cite{GK} which may not be well-known, and so later we will provide a proof of Proposition \ref{prop:multibrot} for the convenience of the reader.

An epicycloid is a plane curve obtained by tracing the path of a particular point on a circle (a small one)  as it is rolled around the circumference of another circle (a big one), see for example Figure \ref{fig:1}. If the smaller circle has radius $r$ and the larger circle has radius $kr$, then in rectangular coordinates, when the center of the larger circle is at the origin, then the epicycloid could be given by the parametric equations
\[
x(\theta) = r\cos ((k+1)\theta) - r(k+1) \cos\theta ,\quad y(\theta) = r\sin((k+1)\theta)- r(k+1)\sin \theta,
\]
or in a more concise form
\begin{equation}\label{eq:epi} 
z(\theta) = r(e^{i(k+1)\theta} - (k+1)e^{i\theta}), \qquad \mbox{for} \quad \theta \in [0, \ 2\pi].
\end{equation}
Here, the smaller circle starts rolling on the left side of the bigger circle counterclockwise and $\theta$ represents the angle between the negative  $x$-axis and the line segment between the origin and  the center of the smaller circle (not the argument of $z(\theta)$).
In particular, when $k=1$, it is a cardioid; and when $k=2$, it is a nephroid.

\begin{figure}[h]
\begin{center}
\includegraphics[width=2in]{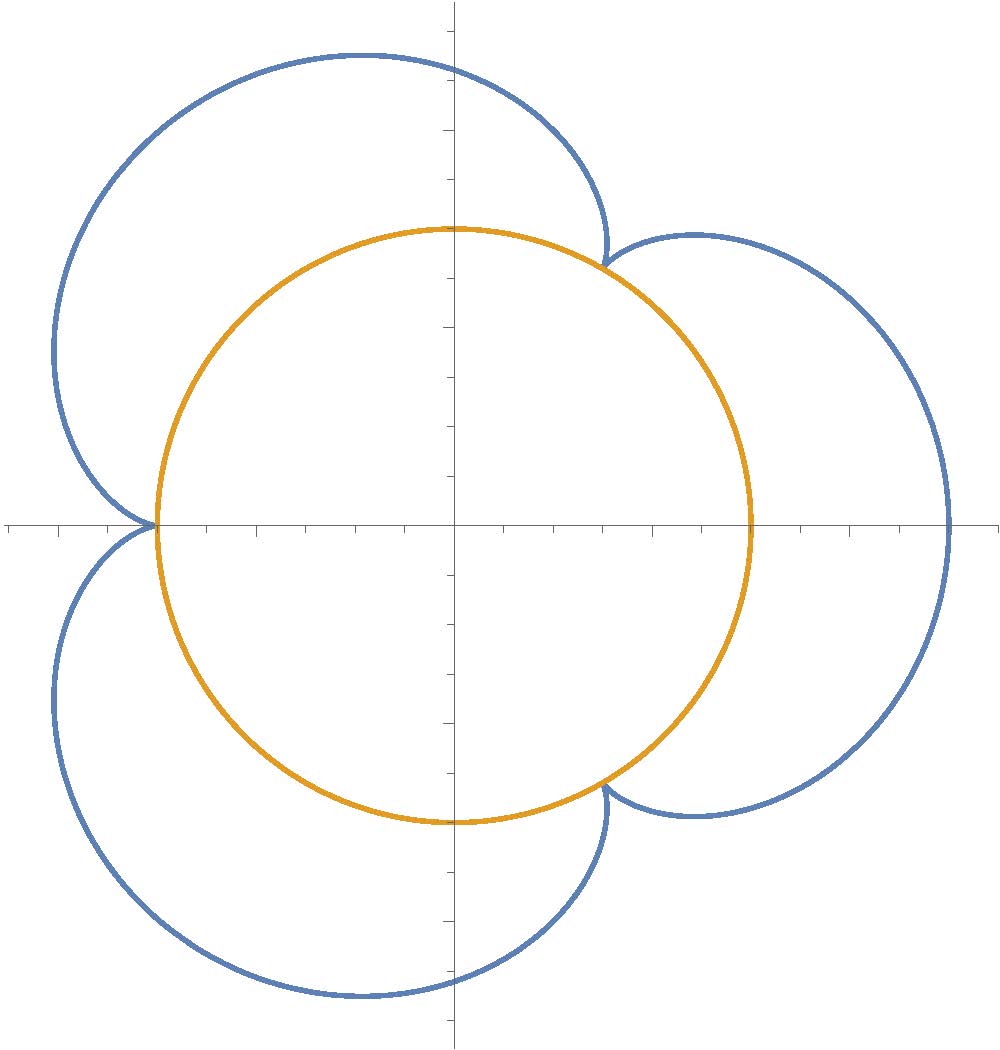}
\caption{An example of an epicycloid with $r=1$ and $k=3$}\label{fig:1}
\end{center}
\end{figure}

\subsection{Blaschke products}

In this paper, we will prove an analogous result in the setting of Blaschke products. A finite Blaschke product $B$ of degree $d$ is a function of the form
\[ B(z) = e^{it} \prod_{i=1}^d \left ( \frac{z-w_i}{1-\overline{w_i}z} \right ),\]
where $t \in [0,2\pi)$ and $w_i \in \D$ for $i=1,\ldots,d$. A finite Blaschke product is called non-trivial if it is not a M\"obius transformation, that is, if $d\geq 2$.

It is not hard to see that the unit disk $\D$, the unit circle $\partial \D$ and the complement of the closed unit disk $\overline{\C} \setminus \overline{\D}$ are all completely invariant sets for $B$. It is well-known that every surjective finite degree mapping $f:\D \to \D$ is in fact a finite Blaschke product and so it is natural to view finite Blaschke products as hyperbolic polynomials. Blaschke products also have symmetry with respect to $\partial \D$: if $z^* =1/\overline{z}$ denotes the reflection of $z$ in the unit circle, then
\begin{align*}
[B(z^*)]^* &= \left [ e^{it} \prod_{i=1}^d \left ( \frac{z^*-w_i}{1-\overline{w_i}z^*} \right )\right ]^*
= e^{it} \prod_{i=1}^d \left ( \frac{1-w_i\overline{z^*}}{\overline{z^*} - \overline{w_i}} \right )\\
&= e^{it} \prod_{i=1}^d \left ( \frac{1-w_i/z}{1/z-\overline{w_i}} \right ) = B(z).
\end{align*}

There is a classification of finite Blaschke products in analogy with M\"obius transformations. This classification relies on the Denjoy-Wolff Theorem which in our situation states that if $B$ is a non-trivial finite Blaschke product, then there exists a unique $z_0 \in \overline{\D}$ such that $B^n(z) \to z_0$ for every $z\in \D$. This point is called the Denjoy-Wolff point of $B$. Note that by the symmetry of Blaschke products, $z_0^*$ is also fixed by $B$, but we will be restricting to the unit disk, and then the Denjoy-Wolff point is unique.
\begin{enumerate}[(i)]
\item $B$ is called {\it elliptic} if the Denjoy-Wolff point $z_0$ of $B$ lies in $\D$. In this case, we must have $|B'(z_0)|<1$.
\item $B$ is called {\it hyperbolic} if the Denjoy-Wolff point $z_0$ of $B$ lies on $\partial \D$ and $B'(z_0)<1$,
\item $B$ is called {\it parabolic} if the Denjoy-Wolff point $z_0$ of $B$ lies on $\partial \D$ and $B'(z_0)=1$,
\end{enumerate}
Note that a fixed point on $\partial \D$ has real multiplier because $\partial \D$ is completely invariant under $B$.

Blashcke products are essentially determined by the location of their critical points. By a result of Zakeri \cite{Zakeri}, if $B_1, B_2$ share the same critical points counting multiplicity, then
$B_1=A\circ B_2$ for some M\"obius map $A$.

\subsection{Dynamics of Blaschke products}

For finite Blaschke products, it is not hard to see that the Julia set is always contained in $\partial \D$ and is either the whole of $\partial \D$ or a Cantor subset of $\partial \D$. These two cases can be characterized as follows; see \cite[p.58]{CG} and \cite{CDP} for a discussion of this characterization:

\begin{enumerate}[(i)]
\item if $B$ is elliptic, $J(B) = \partial \D$,
\item if $B$ is hyperbolic, $J(B)$ is a Cantor subset of $\D$,
\item if $B$ is parabolic and $z_0 \in \partial \D$ is the Denjoy-Wolff point of $B$, $J(B) = \partial \D$ if $B''(z_0) = 0$ and $J(B)$ is a Cantor subset of $\partial \D$ if $B''(z_0)\neq 0$.
\end{enumerate}

The class of Blaschke products of interest in this paper is the class of unicritical Blaschke products, namely those with one critical point in $\D$.
For $d\geq 2$, we define the set 
\[ \mathcal{B}_d  = \left \{ B_w(z):= \left ( \frac{z-w}{1-\overline{w}z} \right )^d : w\in \D, \arg(w) \in \left [0,\frac{2\pi}{d-1} \right ) \right \}\]
of normalized unicritical Blaschke products of degree $d$, which is parameterized by the sector 
\[ S_d = \left \{ w\in \D: \arg(w) \in \left [0,\frac{2\pi}{d-1} \right ) \right \} .\]
We denote by $\mathcal{E}_d$ those parameters in $S_d$ which give elliptic unicritical Blaschke products and by $\mathcal{C}_d$ those parameters in $S_d$ which give rise to unicritical Blaschke products with connected Julia set.
We further denote by $\widetilde{\mathcal{E}_d}\subset \D$  the set 
\[ \widetilde{\mathcal{E}_d} = \bigcup _{j=0}^{d-2} R_j(\mathcal{E}_d),\]
where $R_j$ is the rotation through angle $2\pi j/(d-1)$, and denote by $\widetilde{\mathcal{C}_d}$ the corresponding set for $\mathcal{C}_d$.  When $d=2$, $\mathcal{E}_d$ and $\widetilde{\mathcal{E}_d}$ agree.

The motivation for studying unicritical Blaschke products originally arose in classifying the dynamics of
quasiregular mappings of constant complex dilation near fixed points \cite{Fletcher2}. In \cite{Fletcher}, it was shown that every unicritical Blaschke product of degree $d$ is conjugate to a unique element of $\mathcal{B}_d$ and that the connectedness locus $\mathcal{C}_d$ consists of $\mathcal{E}_d$ and one point on the relative boundary in $S_d$ where $|w| = \frac{d-1}{d+1}$. Further, the set $\widetilde{\mathcal{E}_d} \subset \D$ is a star-like domain about $0$ which contains the disk $ \{w\in \D : |w| <\frac{d-1}{d+1} \}$ and 
the set $\widetilde{\mathcal{C}_d}$ consists of $\widetilde{\mathcal{E}_d}$ and $d-1$ points on its relative boundary in $\overline{\D}$.

In this paper, we will exactly specify the curve bounding $\widetilde{\mathcal{E}_d}$.

\section{Epicycloidal curves in parameter space}

Fix $d\geq 2$.
Recall the epicycloid curve given by \eqref{eq:epi}. We will take the larger circle to have radius $\frac{d-1}{d+1}$ and the smaller circle to have radius $\frac{1}{d+1}$. The corresponding epicycloid will then be contained in the closed unit disk and have parametric equations
$$ 
x(\theta) = \frac{  \cos (d\theta)-d\cos\theta }{d+1} ,\quad y(\theta) = \frac{ \sin(d \theta)-d\sin \theta}{d+1},
$$
which can be written more concisely as $z(\theta) = \frac{ e^{id\theta}-de^{i\theta}}{d+1}$. We will  denote it by $\gamma_d$. 

We first give a proof of Proposition \ref{prop:multibrot} on epicycloids in Multibrot sets.

\begin{proof}
Fix $d\geq 2$ and consider the family of mappings $g_c(z) = z^d+c$ for $c\in \C$. The central hyperbolic component of the Multibrot set consists of those $c$ for which $g_c$ has an attracting fixed point. By continuity, this means that the boundary consists of those $c$ for which $g_c$ has a neutral fixed point.

Let $g_c(z_0) = z_0$ and $|g_c'(z_0)| = 1$. Then 
\[ d|z_0|^{d-1} = 1,\]
and so $z_0 = r_de^{i\alpha}$, where $r_d = d^{1/(1-d)}$ and where $\alpha$ is determined up to a $(d-1)$'th root of unity. Since $g_c(z_0) =  z_0$, we obtain
\[ c = z_0 - z_0^d = r_de^{i\alpha} - r_d^de^{id\alpha}.\]
Now $r_d^d = d^{d/(1-d)} = d^{-1+1/(1-d)} = r_d/d$ and so
\[c = \frac{r_d}{d} \left (  de^{i\alpha} - e^{id\alpha} \right ).\]
This is the equation of an epicycloid, where we starting with the smaller circle touching on the right. The ambiguity in determining $\alpha$ just corresponds to rotating the epicycloid through a multiple of $2\pi / (d-1)$.
\end{proof}

We next turn to unicritical Blaschke products.

\begin{theorem}
\label{thm:main}
The relative boundary of $\widetilde{\mathcal{E}_d}$ in $\overline{\D}$ is the epicycloid given by $\gamma_d$.
\end{theorem}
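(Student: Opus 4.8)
The plan is to mimic the proof of Proposition \ref{prop:multibrot}, replacing the attracting/neutral fixed point dichotomy for $g_c(z)=z^d+c$ with the elliptic/parabolic dichotomy for the family $B_w(z) = \left( \frac{z-w}{1-\overline{w}z} \right)^d$. Since $\widetilde{\mathcal{E}_d}$ consists of parameters giving elliptic Blaschke products (those with an attracting fixed point in $\D$), its relative boundary should correspond to the parabolic case, where the Denjoy-Wolff point $z_0$ lies on $\partial \D$ with $B_w'(z_0)=1$. So first I would set up the two defining equations: $B_w(z_0)=z_0$ with $|z_0|=1$, together with the neutrality condition $B_w'(z_0)=1$.

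The next step is to extract the boundary parametrization explicitly. Writing $z_0 = e^{i\phi}$ on the unit circle, the fixed-point equation $B_w(z_0)=z_0$ and the derivative condition give two relations between $w$ and $\phi$. The key computation is to differentiate $B_w$; with $\varphi_w(z) = \frac{z-w}{1-\overline{w}z}$ we have $B_w = \varphi_w^d$, so $B_w'(z) = d\,\varphi_w(z)^{d-1}\,\varphi_w'(z)$, where $\varphi_w'(z) = \frac{1-|w|^2}{(1-\overline{w}z)^2}$. Imposing $B_w(z_0)=z_0$ means $\varphi_w(z_0)^d = z_0$, and then the neutrality condition becomes $d\, z_0^{(d-1)/d}\, \varphi_w'(z_0) = 1$ after substituting $\varphi_w(z_0)^{d-1} = z_0^{(d-1)/d}$. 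The goal is to solve this system for $w$ as a function of the parameter $\phi$ (or an associated angle $\theta$) and recognize the resulting curve as $\gamma_d$, that is $w = \frac{e^{id\theta}-de^{i\theta}}{d+1}$.

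I expect the main obstacle to be the algebraic manipulation connecting the Blaschke-product parabolicity conditions to the clean epicycloid formula, because unlike the polynomial case the conjugation by a Möbius map and the factor $1-|w|^2$ complicate matters, and one must be careful with the multivaluedness of $z_0^{1/d}$ (the $(d-1)$'th root ambiguity, which corresponds precisely to the $(d-1)$-fold rotational symmetry and the passage from $\mathcal{E}_d$ to $\widetilde{\mathcal{E}_d}$). A useful reduction is to exploit the reflection symmetry $[B(z^*)]^* = B(z)$ noted in the excerpt, and to use the known structure from \cite{Fletcher}: that $\widetilde{\mathcal{E}_d}$ is star-like about $0$ containing the disk of radius $\frac{d-1}{d+1}$, whose boundary circle is tangent to $\gamma_d$ at its cusps. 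This containment lets me argue that the parabolic locus I compute is in fact the full relative boundary rather than merely a subset of it.

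Finally, I would verify the identification by direct comparison. After solving the system, I would show the resulting point $w(\phi)$ traces exactly $\gamma_d$ as $\phi$ ranges over an appropriate interval; matching the cusps (where $|w|=\frac{d-1}{d+1}$, corresponding to the $d-1$ boundary points of $\widetilde{\mathcal{C}_d}$ mentioned in the introduction) against the cusps of the epicycloid provides a consistency check. Since $B_w$ is unicritical with critical point at $w$, and the elliptic-to-parabolic transition is governed purely by the multiplier at the Denjoy-Wolff point, the boundary is a smooth curve except at these $d-1$ cusps, matching the epicycloid with $d-1$ cusps exactly. The bookkeeping of the root-of-unity ambiguity in reconstructing $\widetilde{\mathcal{E}_d}$ from $\mathcal{E}_d$ via the rotations $R_j$ will require care but is not conceptually difficult.
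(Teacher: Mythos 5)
Your proposal follows essentially the same route as the paper: identify the relative boundary with the parabolic locus, impose $B_w(z_0)=z_0$ and $B_w'(z_0)=1$ for $z_0\in\partial\D$, and eliminate $z_0$ to arrive at the parametrization $w=\frac{e^{id\theta}-de^{i\theta}}{d+1}$ of $\gamma_d$. The paper performs the elimination you defer by setting $z_0-w=re^{i\phi}$ with $r=\sqrt{d(1-|w|^2)}$ (using that $|1-\overline{w}z_0|=|z_0-w|$ on the circle) and fixing the resulting $(d+1)$'th-root-of-unity ambiguity by continuity at $w=(1-d)/(1+d)$ --- exactly the algebraic step and the multivaluedness issue you correctly single out as the remaining work.
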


Since the relative boundary in $\overline{\D}$ is given by parabolic Blaschke products, it follows that this theorem is equivalent to showing that if $B$ is a degree $d$ unicritical Blaschke product in $\mathcal{B}_d$ with parameter $w$, then $w \in \gamma_d$. Note that we cannot have $|w|=1$, and so these points of $\gamma_d$ are excluded.

\begin{proof}
Suppose that $d\geq 2$ and $B\in \mathcal{B}_d$ is parabolic with parameter $w=se^{i\psi}$.
This means that the Denjoy-Wolff point $z_0$ of $B$ is on $\partial \D$ and satisfies $B'(z_0)=1$ and $B(z_0) = z_0$.
We therefore have
\begin{equation}
\label{eq:1}
\frac{d(1-|w|^2)}{|1-\overline{w}z_0|^2}=1,
\end{equation}
and
\begin{equation}
\label{eq:2}
\left (\frac{z_0-w}{1-\overline{w}z_0} \right)^d=z_0.
\end{equation}
The aim is to eliminate $z_0$ from these equations and have an expression for $w$.
To that end, we note that since $z_0\overline{z_0}=1$, it follows from \eqref{eq:1} that 
\[ d(1-s^2)=|1-\overline{w}z_0|^2=(|z_0|\cdot|\overline{z_0}-\overline{w}|)^2=|\overline{z_0}-\overline{w}|^2=|z_0-w|^2, \] 
and so 
\[ |z_0-w|=\sqrt{d(1-s^2)}.\]
Taking polar coordinates of $z_0$ with respect to $w$, see Figure \ref{fig:2} we have
\begin{equation}
\label{eq:3}
z_0-w=re^{i\phi},
\end{equation}
where $r=\sqrt{d(1-s^2)}$. 

\begin{figure}[h]
\begin{center}
\includegraphics[width=3in]{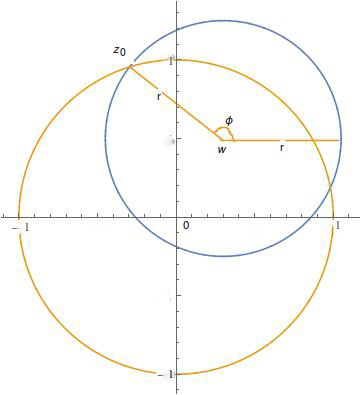}
\caption{Polar coordinate of $z_0$ with respect to $w$.}\label{fig:2}
\end{center}
\end{figure}

Since  $z_0$ is on the unit circle, we get that
\[ 1=z_0\overline{z_0}=(w+re^{i\phi})(\overline{w}+re^{-i\phi}) = s^2+wre^{-i\phi}+\overline{w}re^{i\phi}+r^2,\]
from which we obtain
\[ wre^{-i\phi}+\overline{w}re^{i\phi} = 1-s^2-r^2 = \frac{(1-d)r^2}{d},\]
where we have used $ r^2 = d(1-s^2)$.
If $ r=0 $ then $s=1$, which is impossible since $w\in \D$. Hence
\begin{equation}
\label{eq:4}
r=\frac{d}{1-d}(we^{-i\phi}+\overline{w}e^{i\phi}).
\end{equation}

On the other hand, using $z_0\overline{z_0}=1$ again, we can see from \eqref{eq:2} that
\[ z_0=\left (\frac{z_0-w}{1-\overline{w}z_0} \right)^d= \left (\frac{z_0-w}{z_0(\overline{z_0}-\overline{w})} \right)^d= \left (\frac{1}{z_0} \right)^d \left (\frac{z_0-w}{\overline{z_0}-\overline{w}} \right)^d, \]
and so we conclude
\[ \left (\frac{z_0-w}{\overline{z_0}-\overline{w}} \right)^d=z_0^{d+1}.\]
It follows from \eqref{eq:3} and by taking $(d+1)$'th roots that
\[\eta \exp \left ( \frac{2id\phi}{d+1} \right ) =w+re^{i\phi},\]
for some $(d+1)$'th root of unity $\eta$. To evaluate $\eta$, by continuity of $\phi$ in $w$ it is enough to evaluate this expression at one point. It is not hard to check that if $w=(1-d)/(1+d)$, then $B_w(1)=1$ and $B_w'(1)=1$ and so $z_0=1$ is the corresponding Denjoy-Wolff point. Hence $\phi=0$ for this $w$ and so we may take $\eta =1$.

Substituting \eqref{eq:4} into this equation, we obtain
\[\exp \left ( \frac{2id\phi}{d+1} \right ) = \frac{w}{1-d}+\frac{de^{2i\phi}\overline{w}}{1-d},\]
and so
\begin{equation}
\label{eq:5}
w=(1-d)e^{2id\phi/(d+1)}-de^{2i\phi}\overline{w}.
\end{equation}
Conjugating \eqref{eq:5} yields
\[ \overline{w}=(1-d)e^{-2id\phi/(d+1)}-de^{-i2\phi}w,\]
and substituting this back into \eqref{eq:5} gives
\[ w = (1-d)e^{2id\phi/(d+1)}-de^{2i\phi}\left ( (1-d)e^{-2id\phi/(d+1)}-de^{-i2\phi}w \right ).\]
Solving for $w$, we find that
\[ w = \frac{ e^{2id\phi/(d+1)}}{d+1} - \frac{de^{2i\phi/(d+1)}}{d+1}.\]
Letting $\theta=\frac{2\phi}{d+1}$, we obtain
\[ w = \frac{e^{id\theta} - de^{i\theta}}{d+1}.\]
This is nothing other than the parametric equation for the epicycloid $\gamma_d$, which completes the proof.
\end{proof}

To explain the geometric significance of $\theta$ and locate the Denjoy-Wolff point for any given
point on $\gamma_d$,  we prove the following.

\begin{corollary}
\label{cor:1}
Let $w = (e^{id\theta} - de^{i\theta})/(d+1)\in \gamma_d \cap \D$ and let $z_0$ be the Denjoy-Wolff point of $B_w$. Then $z_0 = e^{id\theta}$.
\end{corollary}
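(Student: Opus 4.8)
The plan is to read the location of $z_0$ directly off the computation already performed in the proof of Theorem \ref{thm:main}, rather than starting afresh; indeed the corollary is essentially a by-product of that proof. Recall that there we wrote $z_0 - w = re^{i\phi}$ as in \eqref{eq:3}, so that $z_0 = w + re^{i\phi}$, and that after taking $(d+1)$'th roots we arrived at the identity
\[ \eta \exp\left(\frac{2id\phi}{d+1}\right) = w + re^{i\phi}, \]
with the ambiguous root of unity pinned down to $\eta = 1$ by evaluating at the base point $w = (1-d)/(1+d)$. Combining these two facts immediately gives $z_0 = \exp\left(2id\phi/(d+1)\right)$.

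The only remaining step is the change of variable that produced the parametrization of $\gamma_d$. Since that parametrization was obtained by setting $\theta = 2\phi/(d+1)$, we have $d\theta = 2d\phi/(d+1)$, and therefore $z_0 = \exp(id\theta) = e^{id\theta}$, as claimed. Thus there is no genuine obstacle here: the only points requiring care are purely matters of bookkeeping, namely that the root of unity is $\eta = 1$ and that $\phi$ and $\theta$ are linked by $\theta = 2\phi/(d+1)$. Geometrically, the corollary simply records that the auxiliary polar angle $\phi$ of Figure \ref{fig:2} is, up to the constant factor $(d+1)/2$, exactly the epicycloid parameter, and that the Denjoy--Wolff point then sits at $e^{id\theta}$ on $\partial \D$.

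Should a self-contained verification be preferred, the route I would take is to substitute the given $w$ and the candidate $z_0 = e^{id\theta}$ directly into \eqref{eq:1} and \eqref{eq:2}. The one simplification that makes everything collapse is the observation that
\[ z_0 - w = \frac{d\left(e^{id\theta} + e^{i\theta}\right)}{d+1}, \qquad 1 - \overline{w}z_0 = \frac{d\left(1 + e^{i(d-1)\theta}\right)}{d+1}, \]
so that, after factoring $e^{id\theta} + e^{i\theta} = e^{i\theta}\left(1 + e^{i(d-1)\theta}\right)$, one obtains the clean identity $(z_0 - w)/(1 - \overline{w}z_0) = e^{i\theta}$. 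In this route the factoring is the only mildly clever point. The fixed-point equation \eqref{eq:2} then reads $(e^{i\theta})^d = e^{id\theta} = z_0$, which holds, and the multiplier condition \eqref{eq:1} reduces, via the half-angle identity $1 + \cos((d-1)\theta) = 2\cos^2((d-1)\theta/2)$, to $B_w'(z_0) = 1$. I expect the first route to be far shorter, so that is the one I would write up, with this direct check relegated to a remark.
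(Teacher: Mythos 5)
Your proposal is correct and takes essentially the same route as the paper: both read $z_0$ off the intermediate identities in the proof of Theorem \ref{thm:main} using $\phi=(d+1)\theta/2$, the paper substituting into the right-hand side $z_0=\frac{w}{1-d}+\frac{de^{2i\phi}\overline{w}}{1-d}$ of the displayed equation while you read the left-hand side $z_0=\eta\exp\left(\frac{2id\phi}{d+1}\right)$ with $\eta=1$, which is marginally more immediate. Your optional direct verification (substituting $w$ and $z_0=e^{id\theta}$ into \eqref{eq:1} and \eqref{eq:2}) also checks out but is not needed.
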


\begin{proof}
We obtain from \eqref{eq:3} and \eqref{eq:4} that, noting $\phi=(d+1)\theta/2$,
$$
z_0 =  \frac{w}{1-d}+\frac{de^{2i\phi}\overline{w}}{1-d}=\frac1{1-d}\left(\frac{e^{id\theta} - de^{i\theta}}{d+1}+ de^{2i\phi}\frac{e^{-id\theta} - de^{-i\theta}}{d+1}\right)=e^{id\theta}.
$$
\end{proof}

This means that $\theta =0$ corresponds to $w = (1-d)/(1+d)$ and $z_0 =1$. As $\theta$ moves from $0$ to $2\pi/(d-1)$, that is, from the cusp at $(1-d)/(1+d)$ to the next cusp $w_1$ taken anticlockwise, the Denjoy-Wolff point $z_0$ moves through an angle of $2\pi d/(d-1)$. This means $z_0$ completes one full revolution of the circle and in addition moves opposite the cusp $w_1$. 

In conclusion, as $w$ completes one revolution, the Denjoy-Wolff point $z_0$ completes $d$ revolutions, and we can consider $\theta$ geometrically as $\arg(z_0)/d$, see Figure \ref{fig:3}.

\begin{figure}[h]
\begin{center}
\includegraphics[width=4in]{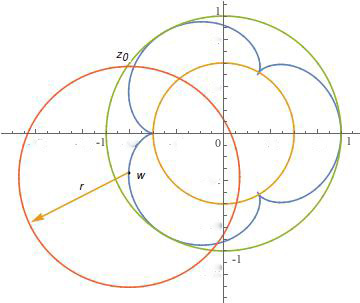}
\caption{How $w$ and $z_0$ vary as $\theta$ varies between $0$ and $2\pi / (d-1)$.}\label{fig:3}
\end{center}
\end{figure}

\section{The case $d=2$}

In this section, we discuss the whole family of degree two Blaschke products. Since a degree two Blaschke product has one critical point in $\D$, it follows that it is unicritical and hence conjugate by a M\"obius mapping to a Blaschke product in $\mathcal{B}_2$. Now, every degree two Blaschke product is of the form
\begin{equation}\label{eq:B2} B(z) = e^{it} \left ( \frac{z-w}{1-\overline{w}z} \right)\left ( \frac{z-u}{1-\overline{u}z} \right),\end{equation}
for some $t \in [0,2\pi )$ and $u,w\in \D$.
If $R$ is the rotation $R(z) = e^{it}z$, then
\[ RBR^{-1}(z) = e^{2it} \left ( \frac{ze^{-it}-w}{1-\overline{w}ze^{-it}} \right)\left ( \frac{ze^{-it}-u}{1-\overline{u}ze^{-it}} \right) = \left ( \frac{z-we^{it}}{1-\overline{we^{it}}z} \right)\left ( \frac{z-ue^{it}}{1-\overline{ue^{it}}z} \right).\]
Therefore, conjugating by a rotation means we can get rid of the $e^{it}$ term in \eqref{eq:B2}. From now on, we assume that 
\begin{equation}\label{eq:B3} B(z) =  \left ( \frac{z-w}{1-\overline{w}z} \right)\left ( \frac{z-u}{1-\overline{u}z} \right)\end{equation}
for some $u,w\in \D$.

\subsection{Location of the critical point}

It is well-known that the critical points of a Blaschke product lie inside the hyperbolic convex hull of the zeros of the Blaschke product, see \cite{Walsh}. For degree two Blaschke products, we can even say a little more. This is presumably well-known, but, for the convenience of the reader,  we include it in the proof of the following theorem.

\begin{theorem}
\label{thm:2}
Let $B$ be a Blaschke product of the form \eqref{eq:B3}. Then the unique critical point $c$ of $B$ in this unit disk lies at the midpoint of the hyperbolic geodesic between $u$ and $w$.
Further, there are sets $D_1, D_2$ such that $D_1\cup D_2=\D$, $D_1\cap D_2=\{c\}$ and
$B$ is a one-to-one map from $D_j$ ($j=1, 2$)  onto $\D$.
\end{theorem}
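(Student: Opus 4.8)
The plan is to prove the two claims of Theorem \ref{thm:2} separately, starting with the critical point location, which will do most of the work, and then use the local degree-2 behavior at that critical point to build the decomposition $\D = D_1 \cup D_2$.

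First I would compute the critical points directly. Writing $B = \phi_w \cdot \phi_u$ where $\phi_a(z) = (z-a)/(1-\overline{a}z)$, the logarithmic derivative gives $B'/B = \phi_w'/\phi_w + \phi_u'/\phi_u$. Since each $\phi_a$ is a M\"obius automorphism of $\D$, one has $\phi_a'/\phi_a = (1-|a|^2)/\bigl((z-a)(1-\overline{a}z)\bigr)$, so the equation $B'(z)=0$ reduces, after clearing denominators, to a single quadratic in $z$. One root lies in $\D$ and the other is its reflection $1/\overline{c}$ outside $\overline{\D}$ (this is forced by the symmetry of Blaschke products noted in the introduction, $[B(z^*)]^* = B(z)$, which implies the critical set is symmetric under $z \mapsto 1/\overline{z}$). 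To identify the interior root with the hyperbolic midpoint, the cleanest route is to conjugate by a M\"obius automorphism $A$ of $\D$ sending the candidate midpoint to $0$: if $m$ denotes the hyperbolic midpoint of $u$ and $w$, then $A \circ B \circ A^{-1}$ is again a degree-two Blaschke product whose zeros $A(u), A(w)$ are hyperbolically symmetric about $0$, hence of the form $\{a, -a\}$ for some $a \in \D$. For such a symmetric pair the product $\phi_a \phi_{-a}$ is manifestly even in $z$ up to the automorphism structure, and a short computation shows its critical point in $\D$ is $0$. Since critical points transform equivariantly under M\"obius conjugacy, the critical point $c$ of the original $B$ is $A^{-1}(0) = m$, the hyperbolic midpoint.

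For the second statement, I would argue that near $c$ the map $B$ is two-to-one (local degree $2$, since $c$ is a simple critical point), while $B$ has degree $2$ as a branched cover of $\D$ over itself. The critical value $v = B(c)$ has exactly one preimage, namely $c$ with multiplicity two; every other point of $\D$ has exactly two distinct preimages. Removing the critical value, $B \colon B^{-1}(\D \setminus \{v\}) \to \D \setminus \{v\}$ is an unbranched degree-two covering of a once-punctured disk. The fundamental group of $\D \setminus \{v\}$ is $\Z$; a connected double cover would correspond to the index-two subgroup, but I must rule this out to get two sheets $D_1, D_2$. The decisive point is that $\partial \D$ is completely invariant and $B|_{\partial \D}$ is a degree-two covering of the circle, so $B^{-1}(\partial \D) = \partial \D$ and the preimage of $\D$ is exactly $\D$ minus the slit to $v$; tracing how the two boundary arcs close up shows the cover is the trivial (disconnected) one, giving two closed Jordan domains $D_1, D_2$ meeting only along a geodesic arc through $c$. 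One then sets $D_j$ to be the closures of these two components (the slit from $c$ to $v$ split between them), so $D_1 \cup D_2 = \D$, $D_1 \cap D_2 = \{c\}$, and $B$ restricts to a conformal bijection $D_j \to \D$ on each.

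The main obstacle I anticipate is the rigorous construction of the decomposition $D_1, D_2$ rather than the critical-point computation, which is essentially forced. Showing that the double cover is disconnected (equivalently, that the monodromy around the critical value is trivial) requires care: one must verify that a loop encircling $v$ in $\D \setminus \{v\}$ lifts to two loops, not to a single loop traversed twice. I would handle this by exploiting that $B$ is a proper holomorphic self-map of $\D$ of degree $2$ and appealing to the structure of such maps — each fiber off the critical value has two points, and the two local inverse branches at any basepoint extend along all of $\D \setminus \{v\}$ because $\D \setminus \{v\}$ deformation retracts onto a loop around $v$ whose monodromy can be read off from the single branch point of a degree-two map (a $2 \to 1$ map with one simple branch point over the disk is conformally $z \mapsto z^2$, whose monodromy swaps the sheets only when the branch point is enclosed). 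The honest subtlety is that here the branch point $v$ lies in the \emph{interior} and the cover $B^{-1}(\D)=\D$ is the full disk, so the global picture is that of folding $\D$ along the geodesic through $c$; making this folding precise, and identifying the fold locus with the geodesic, is where the real content lies.
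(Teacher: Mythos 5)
Your argument for the first claim is essentially the paper's normalization idea run in the opposite direction, but as written it contains a slip: the zeros of the \emph{conjugate} $A\circ B\circ A^{-1}$ are not $A(u),A(w)$ (they are $A(B^{-1}(A^{-1}(0)))$). What you want is the \emph{pre-composition} $B\circ A^{-1}$, which is again a degree-two Blaschke product, does have zeros $A(u),A(w)=\{a,-a\}$, and has critical points at the $A$-images of those of $B$; since $\phi_a\phi_{-a}(z)=(z^2-a^2)/(1-\overline{a}^2z^2)$ is even, its critical point in $\D$ is $0$, and post-composing with a M\"obius map does not create or move critical points. With that repair the first part is fine, and is arguably more elementary than the paper's route (which starts from the critical point $c$, conjugates it to $0$, and invokes Zakeri's theorem to write the map as $A(z^2)$, forcing $p(u)=-p(w)$).

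The second part has a genuine gap. Your ``decisive point'' --- that the unbranched cover over $\D\setminus\{v\}$ is the trivial, disconnected double cover --- is false: $B^{-1}(\D\setminus\{v\})=\D\setminus\{c\}$ is connected, so this is the \emph{connected} double cover of the punctured disk, with monodromy swapping the two preimages of a basepoint, exactly as for $z\mapsto z^2$ on $\D\setminus\{0\}$. (Your own parenthetical remark at the end, that the monodromy around an enclosed branch point swaps sheets, contradicts the disconnectedness claim.) Consequently you cannot obtain $D_1,D_2$ by splitting a covering into components; you must cut the base along an arc from $v$ to $\partial\D$ to make it simply connected, take the two components of the preimage of the cut domain, and then reattach the cross-cut through $c$ carefully so that $D_1\cap D_2$ is exactly $\{c\}$ and not a whole arc (your ``two closed Jordan domains meeting along a geodesic arc'' would violate the statement). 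You concede that ``making this folding precise \dots is where the real content lies,'' which is an admission that the key step is not carried out. The paper avoids all of this: choosing a M\"obius map $q$ with $q(c)=0$ and $q(u)=-q(w)\in\R^{+}$, Zakeri's result gives $S\circ B\circ q^{-1}(z)=z^{2}$ for some M\"obius $S$, and one simply pulls back under $q$ the explicit decomposition of $\D$ into the two half-disks
\[
E_1=\{z\in\D:\ \operatorname{Im}z>0\ \text{or}\ 0\le z<1\},\qquad E_2=\{z\in\D:\ \operatorname{Im}z<0\ \text{or}\ -1<z\le 0\},
\]
on each of which $z\mapsto z^{2}$ is a bijection onto $\D$; this also identifies the dividing set with the hyperbolic geodesic through $u$, $c$, $w$ for free. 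I would recommend adopting that route, or at least replacing the monodromy discussion with the explicit cross-cut construction.
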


\begin{proof}
Let $B$ be as in \eqref{eq:B3} with zeros at $u,w$ and critical point at $c$ on the hyperbolic geodesic joining $u$ to $w$. Let 
\[ p(z) = \frac{z-c}{1-\overline{c}z}\]
be a M\"obius transformation sending $c$ to $0$. Define $\widetilde{B} = p\circ B \circ p^{-1}$. Then $\widetilde{B}$ is another degree two Blaschke product with critical point at $0$. By \cite[Corollary 2]{Zakeri}, there exists a M\"obius map $A$ such that $\widetilde{B}(z) = A(z^2)$. Hence $A^{-1} \circ p \circ B (z)= [p(z)]^2$.

Now since $u,w$ are both zeros of $B$, we have $A^{-1}(p(B(u))) = A^{-1}(p(B(w))) $, but this means $[p(u)]^2 = [p(w)]^2$. Since $p(u) \neq p(w)$, we have $p(u) = -p(w)$. In other words $p(c) =0$ is the hyperbolic midpoint between $p(u)$ and $p(w)$ and applying $p^{-1}$ gives the first result.

To prove the second part, we choose a M\"obius map $q$ with $q(c)=0$, $q(u)=-q(w)$ and $q(u) \in \R^+$.
Thus, $q\circ B$ only has one critical point at $0$ in $\D$. 
Again, by \cite[Corollary 2]{Zakeri}, there exists a M\"obius map $S$ such that $S\circ B \circ q^{-1}(z) = z^2$. 
Set
$$
E_1=\{z \in \D: \ \mbox{Im}z>0 \ \mbox{or} \ 0\le z <1\} \quad \mbox{and} \quad 
E_2=\{z \in \D: \ \mbox{Im}z<0\ \mbox{or} \  \ -1< z\le 0\}.
$$
Obviously, $E_1\cup E_2=\D$, $E_1\cap E_2=\{0\}$, and $z^2$ is a one-to-one  map from $E_j$ ($j=1,2$) onto $\D$, and consequently, 
so is $B$ from $D_j\stackrel{def}{=}q^{-1}(E_j)$ onto $\D$. It follows that the unit disk is divided by the hyperbolic
geodesic passing through points $c, u, w$. 
\end{proof}

The hyperbolic midpoint of $u$ and $w$ plays an important role in what follows. It can be explicitly computed in terms of $u$ and $w$.

\begin{proposition}
\label{prop:2}
If $w=-u$, then $c=0$. Otherwise, the hyperbolic midpoint of $u$ and $w$ is
\[ c = \frac{ |wu|^2 - 1 + \sqrt{ (1-|w|^2)(1-|u|^2)|1-w\overline{u}|^2} }{\overline{wu}(w+u) - \overline{(w+u)} }.\]
\end{proposition}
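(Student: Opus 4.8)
The plan is to use the characterization of the hyperbolic midpoint that is implicit in the proof of Theorem~\ref{thm:2}: the point $c$ is the hyperbolic midpoint of $u$ and $w$ precisely when the disk automorphism $p(z) = (z-c)/(1-\overline{c}z)$ sends $u$ and $w$ to antipodal points, i.e. $p(u) = -p(w)$. First I would write this condition as $\frac{u-c}{1-\overline{c}u} + \frac{w-c}{1-\overline{c}w} = 0$ and clear denominators, which after expanding yields the single relation
\[ (u+w)(1+|c|^2) - 2\overline{c}\,uw - 2c = 0. \]
The degenerate case $w=-u$ can then be read off at once: the equation collapses to $\overline{c}\,u^2 = c$, and taking moduli forces $|c|(1-|u|^2)=0$, hence $c=0$, as claimed.

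For the generic case $w \neq -u$, the idea is to eliminate $\overline{c}$. Writing $S = u+w$ and $P = uw$, the relation above reads $S + S|c|^2 - 2P\overline{c} - 2c = 0$; I would solve this for $\overline{c}$ and substitute into its complex conjugate. After simplification this should produce a genuine quadratic in $c$ alone,
\[ A c^2 - 2(1-|P|^2)c + \overline{A} = 0, \qquad A := \overline{u}(1-|w|^2) + \overline{w}(1-|u|^2), \]
whose middle coefficient is real and whose outer coefficients are complex conjugates. Along the way one should verify that $A=0$ occurs exactly when $w=-u$, so that in the present case the quadratic is non-degenerate and the denominator $-A = \overline{wu}(w+u) - \overline{(w+u)}$ appearing in the statement is nonzero.

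Applying the quadratic formula then gives
\[ c = \frac{(1-|P|^2) \pm \sqrt{(1-|P|^2)^2 - |A|^2}}{A}, \]
and the crux of the argument is the algebraic identity
\[ (1-|wu|^2)^2 - |A|^2 = (1-|u|^2)(1-|w|^2)\,|1-w\overline{u}|^2. \]
I expect this discriminant simplification to be the main technical obstacle. Setting $a = 1-|u|^2$ and $b = 1-|w|^2$ turns $A$ into $b\overline{u} + a\overline{w}$ and reduces the identity to a short symmetric computation, the key intermediate step being $ab + 2 - a - b = 1 + |u|^2|w|^2$, after which the bracketed factor becomes exactly $|1-w\overline{u}|^2$. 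Rewriting the square root accordingly and absorbing a sign into the denominator converts the formula into the form stated in the proposition.

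Finally I would pin down the sign. The product of the two roots of the quadratic is $\overline{A}/A$, which has modulus $1$, so exactly one root lies in $\D$; since the hyperbolic midpoint necessarily lies in $\D$, it must be that root. To confirm that the $+$ sign in the statement selects the correct branch, I would check both sides in one convenient configuration, for instance $w=0$ and $u = t \in (0,1)$, where the midpoint of $0$ and $t$ is the elementary $(1-\sqrt{1-t^2})/t \in \D$, and then invoke continuity of both the midpoint and the chosen branch over the connected parameter set $\{(u,w)\in\D^2 : w \neq -u\}$.
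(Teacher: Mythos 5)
Your argument is correct and lands on exactly the same quadratic as the paper, but you reach it by a different door. The paper differentiates: it computes $B'(c)=0$ directly, which yields $(\overline{wu}(w+u)-\overline{(w+u)})c^2+2(1-|wu|^2)c+wu\overline{(w+u)}-(w+u)=0$; this is precisely your $Ac^2-2(1-|P|^2)c+\overline{A}=0$ after multiplying by $-1$, since $\overline{wu}(w+u)-\overline{(w+u)}=-A$ and $wu\overline{(w+u)}-(w+u)=-\overline{A}$. You instead start from the intrinsic midpoint condition $p(u)=-p(w)$ and eliminate $\overline{c}$, which makes the proof self-contained hyperbolic geometry (no reference to $B$ at all) and proves the statement as literally phrased, whereas the paper's computation produces the critical point and leans on Theorem \ref{thm:2} for its identification with the hyperbolic midpoint. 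From the quadratic onward the two proofs coincide: the same discriminant identity $(1-|wu|^2)^2-|A|^2=(1-|w|^2)(1-|u|^2)|1-w\overline{u}|^2$ and the same selection of the root inside $\D$ via the product of the roots having modulus $1$. One small point to tighten: ``the product of the roots has modulus $1$, so exactly one root lies in $\D$'' is not quite enough on its own, since both roots could a priori sit on $\partial\D$; you also need that the two roots have different moduli, which is immediate here because their numerators $(1-|P|^2)\pm\sqrt{D}$ are the sum and difference of two strictly positive reals (this is exactly the paper's observation that $|c_2|>|c_1|$). With that noted, your closing test case at $w=0$, $u=t$ and the continuity argument become unnecessary, though they do no harm as a sanity check.
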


\begin{proof}
By computing $B'(c) = 0$, we find that $c$ must satisfy the quadratic equation
\begin{equation}\label{new.eq1}
(\overline{wu}(w+u)-\overline{(w+u)})c^2+2(1-|wu|^2)c+wu\overline{(w+u)}-(w+u)=0.
\end{equation}
If $w=-u$, we have $\overline{wu}(w+u)-\overline{(w+u)}=0$ and so $c=0$.
On the other hand, if $w\neq -u$, then there are two possible solutions of the quadratic:
\begin{equation*}
c_1=\frac{|wu|^2-1 + \sqrt{(1-|wu|^2)^2-|\overline{wu}(w+u)-\overline{(w+u)}|^2}}{\overline{wu}(w+u)-\overline{(w+u)}},
\end{equation*}
and
\begin{equation*}
c_2=\frac{|wu|^2-1 - \sqrt{(1-|wu|^2)^2-|\overline{wu}(w+u)-\overline{(w+u)}|^2}}{\overline{wu}(w+u)-\overline{(w+u)}}.
\end{equation*}
One may check that
\[
(1-|wu|^2)^2-|\overline{wu}(w+u)-\overline{(w+u)}|^2= (1-|w|^2)(1-|u|^2)|1-w\overline{u}|^2
\]
and so we find that for $u, w \in \D$,
$$(1-|wu|^2)^2-|\overline{wu}(w+u)-\overline{(w+u)}|^2>0.$$
Clearly, $|c_2|>|c_1|$ since $c_1$ and $c_2$ have the same denominator. Further,  (\ref{new.eq1}) gives
$$|c_1c_2|=\left| \frac{wu\overline{(w+u)}-(w+u)}{\overline{wu}(w+u)-\overline{(w+u)}}\right|=1,$$
and so we obtain $|c_1|<1$. Hence $B(z)$  has only one critical point in $\D$, and it is given by $c=c_1$.
\end{proof}

\subsection{Parameter for degree $2$ Blaschke products}

We next define a function which encodes the dynamics of $B$ in terms of the parameters.

\begin{definition}
\label{def:L}
Let $B$ be a Blaschke product of the form \eqref{eq:B3} with zeros at $u$ and $w$ and critical point at $c$. Then we define
\[ \lambda = \lambda(B) = \exp \left [ i \left ( \pi + 2\arg (1-\overline{c}w) + 2\arg ( 1-\overline{c} u) + 2\arg ( 1-c\overline{B(c)} \right ) \right ] \left ( \frac{B(c) - c}{1-\overline{c}B(c)} \right ).\]
\end{definition}

When $d=2$, no two distinct Blaschke products of the form $\left ( \frac{z-w}{1-\overline{w}z} \right )^2$ are conjugate by M\"obius transformations. In other words, $\mathcal{B}_2$ is parameterized by the unit disk $\D$. Recall that $\mathcal{E}_2$ is the set of parameters giving rise to elliptic Blaschke products in $\mathcal{B}_2$.

\begin{theorem}
\label{thm:3}
Let $B$ be of the form \eqref{eq:B3}. Then $B$ is elliptic, parabolic or hyperbolic respectively if and only if $\lambda \in \mathcal{E}_2$, the relative boundary of $\mathcal{E}_2$ in $\D$ or in $\D \setminus \overline{\mathcal{E}_2}$ respectively.
\end{theorem}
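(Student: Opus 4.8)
The plan is to reduce the statement to the fact that M\"obius conjugation preserves the elliptic/parabolic/hyperbolic trichotomy, and then to identify $\lambda(B)$ of Definition~\ref{def:L} with the parameter of the normalized model to which $B$ is conjugate. By Theorem~\ref{thm:2} the map $B$ in \eqref{eq:B3} is unicritical with critical point $c$ equal to the hyperbolic midpoint of $u$ and $w$, so by \cite{Fletcher} it is conjugate by an automorphism of $\D$ to a unique $B_\lambda(z)=\left(\frac{z-\lambda}{1-\overline{\lambda}z}\right)^2\in\mathcal{B}_2$, where for $d=2$ the parameter $\lambda$ ranges over all of $\D$. Since the Denjoy-Wolff point and its multiplier are conjugacy invariants, $B$ and $B_\lambda$ are of the same dynamical type; and by the definition of $\mathcal{E}_2$ together with Theorem~\ref{thm:main} (which identifies the relative boundary of $\mathcal{E}_2$ in $\D$ with the parabolic locus $\gamma_2\cap\D$), $B_\lambda$ is elliptic, parabolic or hyperbolic exactly when $\lambda$ lies in $\mathcal{E}_2$, in its relative boundary in $\D$, or in $\D\setminus\overline{\mathcal{E}_2}$. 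Thus everything comes down to showing that the conjugacy parameter equals $\lambda(B)$.

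To compute that parameter, I would build the conjugating automorphism explicitly. A conjugacy must send the critical point $c$ of $B$ to the critical point $\lambda$ of $B_\lambda$ and the critical value $B(c)$ to the critical value $0$ of $B_\lambda$. Writing $M_a(z)=\frac{z-a}{1-\overline{a}z}$ and $R_\alpha(z)=e^{i\alpha}z$, I set $\phi=R_\alpha\circ M_{B(c)}$, so that $\phi(B(c))=0$ and $\lambda=\phi(c)=e^{i\alpha}M_{B(c)}(c)$; note $|\lambda|=|M_{B(c)}(c)|=\left|\frac{B(c)-c}{1-\overline{c}B(c)}\right|$, which already matches the modulus of the factor in Definition~\ref{def:L}. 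The conjugate $\Psi=\phi\circ B\circ\phi^{-1}$ is a degree two Blaschke product with critical point $\lambda$ and critical value $0$, so by \cite[Corollary 2]{Zakeri} (exactly as in the proof of Theorem~\ref{thm:2}) it has the form $e^{i\beta}B_\lambda$. A short computation with rotations shows $\beta=\gamma-\alpha$, where $e^{i\gamma}$ is the unimodular constant in $g=e^{i\gamma}(M_\mu)^2$ with $g=M_{B(c)}\circ B\circ M_{B(c)}^{-1}$ and $\mu=M_{B(c)}(c)$; choosing $\alpha=\gamma$ forces $\Psi=B_\lambda$ and pins down $\lambda=e^{i\gamma}\mu$.

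It then remains to evaluate the phase $\gamma$. Since $g$ has a double critical point and $B'(c)=0$, the chain rule gives $e^{i\gamma}=\frac{g''(\mu)}{2M_\mu'(\mu)^2}=\frac{M_{B(c)}'(B(c))\,B''(c)}{2M_\mu'(\mu)^2\,M_{B(c)}'(c)^2}$, and since $\arg M_a'(z)=-2\arg(1-\overline{a}z)$ this reduces to $\gamma\equiv\arg B''(c)+4\arg(1-c\overline{B(c)})\pmod{2\pi}$. To finish I would compute $\arg B''(c)$ from $B=M_wM_u$, using $B'(c)=0$, the midpoint relation $M_c(u)=-M_c(w)$ from Theorem~\ref{thm:2}, and the identity $M_\zeta(c)=-e^{2i\arg(1-\overline{c}\zeta)}M_c(\zeta)$, which yields $B(c)=e^{2i(\arg(1-\overline{c}w)+\arg(1-\overline{c}u))}M_c(w)M_c(u)$. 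Feeding these relations into $\arg B''(c)$ and combining with the phase of $\mu$ should reproduce exactly the prefactor $\exp\big[i\big(\pi+2\arg(1-\overline{c}w)+2\arg(1-\overline{c}u)+2\arg(1-c\overline{B(c)})\big)\big]$ of Definition~\ref{def:L}.

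I expect this last step to be the main obstacle: the explicit determination of $\arg B''(c)$, equivalently of $\gamma$, and its trigonometric reconciliation with the formula. The bookkeeping must be carried out symmetrically in $u$ and $w$ (the midpoint relation is what removes the apparent asymmetry), with the constant $\pi$ and the doubled terms $2\arg(1-\overline{c}w)$, $2\arg(1-\overline{c}u)$ arising from the two linear factors of $B$ near $c$, and the term $2\arg(1-c\overline{B(c)})$ coming from the normalizing map $M_{B(c)}$. By contrast, the conjugacy-invariance of the trichotomy and its translation into membership in $\mathcal{E}_2$ via Theorem~\ref{thm:main} are routine once the parameter has been identified.
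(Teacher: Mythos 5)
Your strategy is correct in outline, but the way you pin down the parameter is genuinely different from the paper's. The paper normalizes by sending the \emph{critical point} $c$ to $0$ via $p=M_c$, invokes Zakeri to write $pBp^{-1}=A(z^2)$, and then determines $A(z)=e^{i\sigma}(z-\mu)/(1-\overline{\mu}z)$ purely algebraically from two point evaluations, $A(0)=p(B(c))$ and $A(p(w)^2)=p(B(w))=-c$ (exploiting the known zero $w$ of $B$); solving these for $\mu$ and $e^{i\sigma}$ gives $\lambda=\mu e^{2i\sigma}=p(B(c))^2/\mu$ with no differentiation. You instead normalize by sending the \emph{critical value} $B(c)$ to $0$ and recover the residual rotation from second derivatives at the critical point. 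Your intermediate steps are all correct: $\beta=\gamma-\alpha$, $e^{i\gamma}=g''(\mu)/(2M_\mu'(\mu)^2)$, and $\gamma\equiv\arg B''(c)+4\arg(1-c\overline{B(c)})$ all check out, and combined with $\mu=-e^{-2i\arg(1-c\overline{B(c)})}\tfrac{B(c)-c}{1-\overline{c}B(c)}$ they reduce Definition~\ref{def:L} to the single identity $\arg B''(c)\equiv 2\arg(1-\overline{c}w)+2\arg(1-\overline{c}u)\pmod{2\pi}$. The trade-off is clear: the paper's evaluation trick avoids derivatives by using the zeros of $B$, while your version is more intrinsic but concentrates all the difficulty in $\arg B''(c)$.

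The step you flag as the main obstacle does close, so there is no gap in principle, but as written your argument is a program rather than a proof; you should supply the following. Writing $B=M_wM_u$ and using $B'(c)=0$ in the form $M_w'(c)M_u(c)=-M_w(c)M_u'(c)$, a short computation collapses $B''(c)=M_w''M_u+2M_w'M_u'+M_wM_u''$ at $c$ to the closed form
\[ B''(c)=\frac{2(1-|u|^2)(1-w\overline{u})}{(1-\overline{u}c)^3(1-\overline{w}c)}=\frac{2(1-|w|^2)(1-u\overline{w})}{(1-\overline{w}c)^3(1-\overline{u}c)} , \]
and comparing the arguments of these two necessarily equal expressions gives $\arg(1-w\overline{u})=\arg(1-\overline{c}w)-\arg(1-\overline{c}u)$ (a priori mod $\pi$, and mod $2\pi$ by continuity from the case $u=w$). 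Substituting this back into $\arg B''(c)=\arg(1-w\overline{u})+3\arg(1-\overline{c}u)+\arg(1-\overline{c}w)$ yields exactly the required $2\arg(1-\overline{c}w)+2\arg(1-\overline{c}u)$, and hence the prefactor in Definition~\ref{def:L}. This computation is of comparable length to the paper's own algebra, so once it is included your proof is complete and a legitimate alternative to the published one.
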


We remark that if we view $u$ as fixed and let $w\to u$, then $c\to u$, $1-\overline{c}w \to 1-|w|^2 \in \R$, $B(c) \to 0$ and so $\lambda \to e^{i\pi}(-c) = u$. Hence if the zeros of $B$ coincide, we just obtain again results from \cite{Fletcher}.

\begin{proof}[Proof of Theorem \ref{thm:3}]
Consider again the set-up in the proof of Theorem \ref{thm:2}. The Blaschke product $B$ is conjugate via $p$ to $\widetilde{B}$ and we may write $\widetilde{B}(z) = A(z^2)$. If we write
\[ A(z) = e^{i\sigma} \left ( \frac{z-\mu}{1-\overline{\mu}z} \right),\]
with $\sigma,\mu$ to be determined, then
\[ A^{-1} \circ \widetilde{B} \circ A(z) = [A(z)]^2 = e^{2i\sigma} \left (\frac{z-\mu}{1-\overline{\mu}z} \right )^2=: B_1(z).\]
Finally, if $R(z) = e^{2i\sigma}z$, we have
\[ R\circ B_1 \circ R^{-1}(z) = \left ( \frac{z-\mu e^{2i\sigma}}{1-\overline{\mu e^{2i\sigma}}z } \right )^2,\]
and this is a Blaschke product in $\mathcal{B}_2$. Since conjugating by M\"obius maps does not change the classification of Blaschke products, $B$ is classified by the location of $\mu e^{2i\sigma}$ in $\D$. This will be our $\lambda$, and so we just have to express this in terms of $u,w$ and $c$.

First, $A(0) = -\mu e^{i\sigma}$, but also since $A(z^2) = p(B(p^{-1}(z)))$, which is from the proof of Theorem \ref{thm:2},  we have 
\begin{equation}\label{eq:thm3a}
-\mu e^{i\sigma} = p(B(c)). 
\end{equation}
We also have
\begin{equation}
\label{eq:thm3b} 
A(p(w)^2) =e^{i\sigma} \left ( \frac{p(w)^2 - \mu }{1-\overline{\mu}p(w)^2} \right ) = p(B(w)) = -c.
\end{equation}
Therefore combining \eqref{eq:thm3a} and \eqref{eq:thm3b} yields
\[ \frac{1}{\mu} \left (  \frac{p(w)^2 - \mu }{1-\overline{\mu}p(w)^2} \right ) = \frac{c}{p(B(c))},\]
and rearranging gives
\[ p(w)^2 - \mu = \frac{c}{p(B(c))} \left ( \mu -|\mu |^2p(w)^2 \right).\]
Since 
$$|\mu| =|A(0)|=|p(B(p^{-1}(0)))|= |p(B(c))|,$$
and so
\[ p(w)^2 - \mu = \frac{c}{p(B(c))} \left ( \mu -|p(B(c)) |^2p(w)^2 \right).\]
Solving for $\mu$, we obtain
\[ \mu = p(w)^2p(B(c)) \left ( \frac{1+ c\overline{p(B(c))}}{c+p(B(c))} \right ).\]
Using \eqref{eq:thm3a} again gives $e^{i\sigma}=p(B(c))/(-\mu)$,
therefore, the parameter $\lambda$ is given by
\begin{align*} 
\lambda &= \mu e^{2i\sigma} = \frac{p(B(c))^2}{\mu} \\
&=p(w)^{-2} p(B(c)) \left ( \frac{c+p(B(c))}{1+c\overline{p(B(c))}} \right ).
\end{align*}
This can be simplified as follows. First of all, we have
\begin{align*}
\frac{c+p(B(c))}{1+c\overline{p(B(c))}}  &= \frac{ c+ \frac{B(c) -c}{1-\overline{c}B(c)} }{1+c \left ( \frac{ \overline{B(c)} - \overline{c} }{1-c\overline{B(c)}} \right ) }\\
&= \frac{ (1-c\overline{B(c)})B(c)(1-|c|^2) }{(1-\overline{c}B(c))(1-|c|^2)}\\
&= B(c)\exp[ 2i\arg(1-c\overline{B(c)})].
\end{align*}
Next, since $p(w) = -p(u)$ we have
\begin{align*}
B(c)p(w)^{-2} &= -\left ( \frac{ c-w}{1-\overline{w}c} \right)\left ( \frac{ c-u}{1-\overline{u}c} \right)\left ( \frac{ w-c}{1-\overline{c}w} \right)^{-1}\left ( \frac{ u-c}{1-\overline{c}u} \right)^{-1}\\
&= -\exp[2i\arg(1-\overline{c}w)]\exp[2i\arg(1-\overline{c}u)]\\
&= \exp[i( \pi + 2\arg (1-\overline{c}w)+2\arg(1-\overline{c}u))].
\end{align*}
Subsituting this into the expression for $\lambda $ yields
\[ \lambda =  \exp \left [ i \left ( \pi + 2\arg (1-\overline{c}w) + 2\arg ( 1-\overline{c} u) + 2\arg ( 1-c\overline{B(c)} \right ) \right ] \left ( \frac{B(c) - c}{1-\overline{c}B(c)} \right ) \]
as required.
\end{proof}

The significance of $\lambda$ is that $B$ of the form \eqref{eq:B3} is conjugate to $B_{\lambda}(z) = \left ( \frac{z-\lambda}{1-\overline{\lambda}z} \right )^2$. Hence if two Blaschke products $B_1,B_2$ have the same parameter $\lambda$, then they are conjugate via a M\"obius transformation.

\subsection{The real case}

If we further restrict to the case where $u,w$ are real, we may say more.

\begin{lemma}
\label{lem:real}
If $u,w$ are real, then $\lambda$ is real.
\end{lemma}

\begin{proof}
Since $u,w$ are real, the hyperbolic midpoint $c$ is real. Further, $B(c)$ must be real. All the terms of the form $\arg(1-\overline{c}u)$ in Definition \ref{def:L} are $0$, and so we conclude that
\[ \lambda = \frac{c-B(c)}{1-cB(c)} \in \R.\]
\end{proof}

\begin{theorem}
\label{thm:real}
For $\lambda \in (-1,1)$, the conjugacy class of $B_{\lambda}$ in
\[ \left \{ \left ( \frac{z-w}{1-wz} \right ) \left ( \frac{z-u}{1-uz} \right ) : u,w \in (-1,1) \right \} \]
is parameterized by the curve $w=f_{\lambda}(u)$ in $(-1,1)^2$, where
\[ f_{\lambda}(u) = \frac{(1+\lambda)u - 2\lambda}{2u-(1+\lambda)}.\]
For $\lambda \in (-1,1)$, the curves $f_{\lambda}$ are distinct, decreasing and each intersects the line $u=w$ in only one point, at $u=w=\lambda$.
\end{theorem}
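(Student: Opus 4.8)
The plan is to realize the conjugacy class as a level set of the invariant $\lambda$ and then to evaluate that invariant explicitly on products with real zeros.

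First I would reduce the statement to an algebraic identity. By the discussion following Theorem~\ref{thm:3}, every product of the form \eqref{eq:B3} is M\"obius conjugate to $B_{\lambda(B)}$, and since no two distinct members of $\mathcal{B}_2$ are conjugate, $B_\mu$ and $B_\nu$ are conjugate if and only if $\mu=\nu$. Hence two products of the form \eqref{eq:B3} are conjugate precisely when they share the same value of $\lambda$, and writing $B_{u,w}$ for the product with zeros $u,w$, the condition $B_{u,w}\sim B_\lambda$ is equivalent to $\lambda(B_{u,w})=\lambda$. For $u,w\in(-1,1)$, Lemma~\ref{lem:real} gives that this value is real and equal to $(c-B(c))/(1-cB(c))$, where $c$ is the real hyperbolic midpoint of $u$ and $w$. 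The theorem therefore reduces to computing $\lambda(B_{u,w})$ as a function of $u,w$ and describing its level sets.

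For the computation I would set $S=u+w$ and $Q=uw$ and record two facts about $c$. Specializing Proposition~\ref{prop:2} to real zeros yields $Sc=1+Q-P$ with $P=\sqrt{(1-u^2)(1-w^2)}$, while specializing the quadratic \eqref{new.eq1} yields the clean relation $S(c^2+1)=2(1+Q)c$. Substituting $c$ into $B(c)=\tfrac{(c-u)(c-w)}{(1-uc)(1-wc)}$ and using $Sc=1+Q-P$ expresses $B(c)$ as a ratio whose only irrationality is $P$; forming $\lambda(B_{u,w})=(c-B(c))/(1-cB(c))$ and substituting $P=1+Q-Sc$ then makes every occurrence of $P$ cancel, leaving a rational function of $c,S,Q$. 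Applying $S(c^2+1)=2(1+Q)c$ to eliminate $c$ collapses this to
\[ \lambda(B_{u,w})=\frac{(u+w)-2uw}{2-(u+w)}. \]
Equivalently, one can produce these conjugacies by hand, conjugating $B_\lambda$ by $z\mapsto(z-a)/(1-az)$ with $a\in(-1,1)$ and reading off $u,w$; this is constructive but requires a separate argument that no other disk automorphism keeps us in the family, whereas the level-set description gives equality directly.

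Setting $\lambda(B_{u,w})=\lambda$ and clearing the denominator $2-(u+w)$, which is positive on $(-1,1)^2$, gives the symmetric relation $2uw-(1+\lambda)(u+w)+2\lambda=0$; solving for $w$ returns exactly $w=f_\lambda(u)$, which is the claimed parameterization (the symmetry in $u,w$ matches $B_{u,w}=B_{w,u}$). The remaining assertions are then routine: $f_\lambda$ is a M\"obius function of $u$ with determinant $-(1-\lambda)^2<0$, hence strictly decreasing; the equation $u=f_\lambda(u)$ factors as $(u-1)(u-\lambda)=0$, so the only intersection with the diagonal inside $(-1,1)^2$ is $u=w=\lambda$; and distinct values of $\lambda$ give distinct diagonal intersection points, so the curves are distinct. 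The one genuinely delicate step is the simplification above: a priori $\lambda(B_{u,w})$ carries both the surd $P$ and the critical point $c$, and it is not obvious that the result is rational, let alone as simple as $(S-2Q)/(2-S)$. The two identities $Sc=1+Q-P$ and $S(c^2+1)=2(1+Q)c$ are exactly what force the surd to cancel and then remove $c$, so careful bookkeeping in forming $(c-B(c))/(1-cB(c))$ is the crux, with everything else elementary.
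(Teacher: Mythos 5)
Your overall strategy coincides with the paper's: identify the conjugacy class of $B_\lambda$ with the level set $\{\lambda(B_{u,w})=\lambda\}$, use Lemma \ref{lem:real} to write $\lambda=(c-B(c))/(1-cB(c))$ with $c$ real, show this equals $\frac{u+w-2uw}{2-u-w}$, and solve for $w$. Where you differ is in the middle algebra. The paper replaces $B(c)$ by $-p(w)^2$ (via $p(u)=-p(w)$), computes $p(w)^2=\frac{(1-uw)-\sqrt{\Delta}}{(1-uw)+\sqrt{\Delta}}$ by a cross-ratio argument, and then simplifies $\lambda=\frac{c+p(w)^2}{1+cp(w)^2}$ through the $I_1,I_2$ manipulation. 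You instead substitute $c$ directly into $B(c)=\frac{c^2-Sc+Q}{1-Sc+Qc^2}$ and exploit the two relations $Sc=1+Q-P$ and $S(c^2+1)=2(1+Q)c$ (the latter being the real specialization of \eqref{new.eq1}) to kill first the surd and then $c$ itself; I checked that this does collapse to $-\frac{(2Q-S)(1+S+Q)}{(2-S)(1+S+Q)}=\frac{S-2Q}{2-S}$, the common factor $1+S+Q=(1+u)(1+w)$ being nonzero on $(-1,1)^2$. Your route avoids the cross-ratio computation of $p(w)^2$ and makes the mechanism of cancellation more transparent; the paper's route keeps the surd visible but packages the final simplification neatly. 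You are also more explicit than the paper about why level sets of $\lambda$ are exactly conjugacy classes, and about the elementary claims at the end (negative determinant $-(1-\lambda)^2$, the factorization $(u-1)(u-\lambda)=0$ of the diagonal equation), which the paper leaves to the reader.

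One small point to patch: your elimination of $c$ divides by $S=u+w$ (both in $Sc=1+Q-P$, which comes from the ``otherwise'' branch of Proposition \ref{prop:2}, and in solving $S(c^2+1)=2(1+Q)c$ for $c^2$), so the case $w=-u$ must be treated separately. The paper does this in one line: there $c=0$, $\lambda=-B(0)=u^2$, and indeed $f_{u^2}(u)=-u$; equivalently, your final formula $\lambda=(S-2Q)/(2-S)$ extends continuously to $S=0$ with the correct value $-Q=u^2$. Add that sentence and the argument is complete.
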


The significance of this theorem is that in the parameter space of $(u,w) \in (-1,1)^2$, each conjugacy class is given by a curve that can be collapsed onto the line $u=w$, which gives the real axis in the cardioid picture of parameter space for degree $2$ Blaschke products. See Figure \ref{fig:4}.

\begin{figure}[h]
\begin{center}
\includegraphics[width=2in]{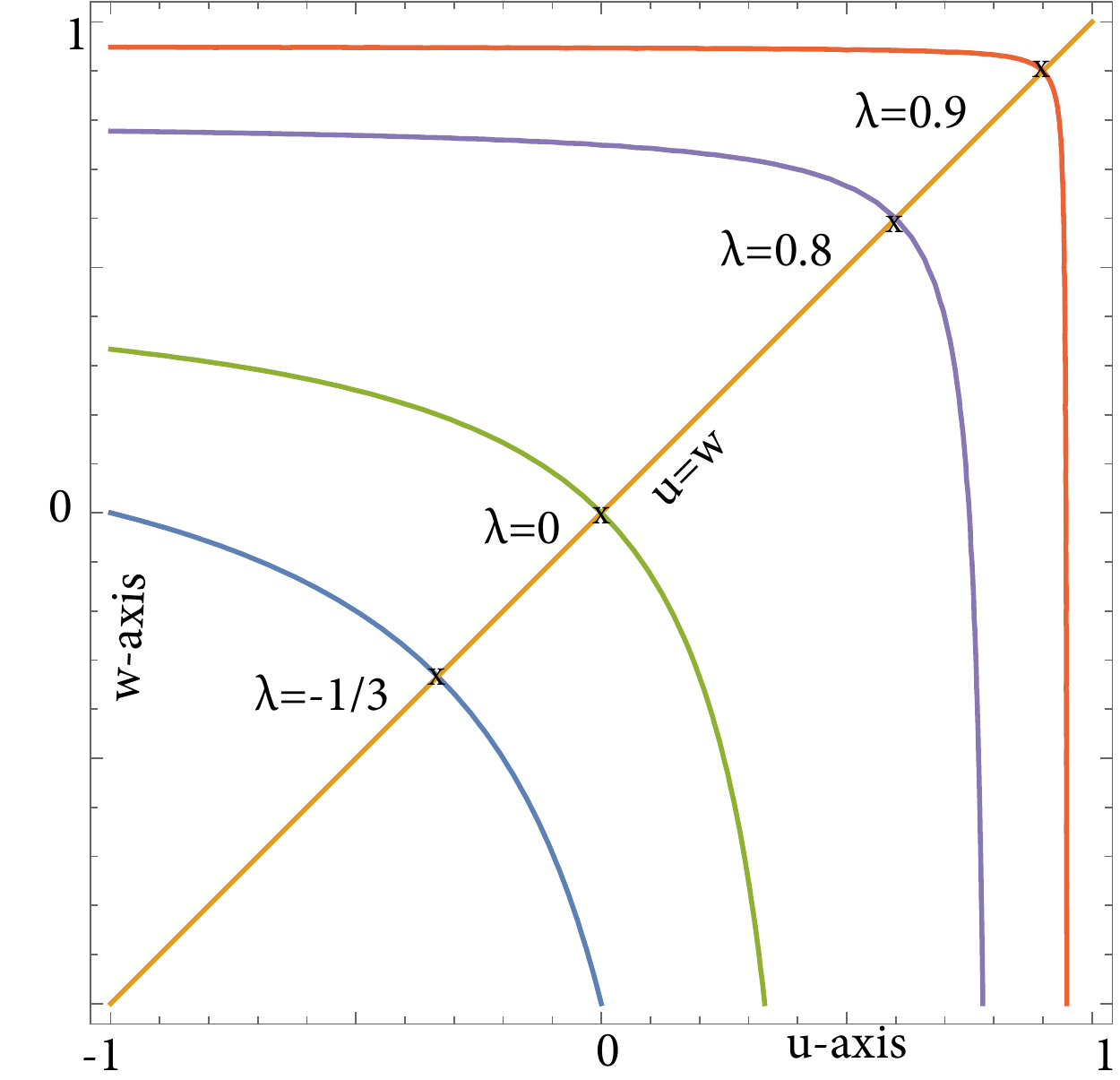}
\hspace{1in}
\includegraphics[width=1.7in]{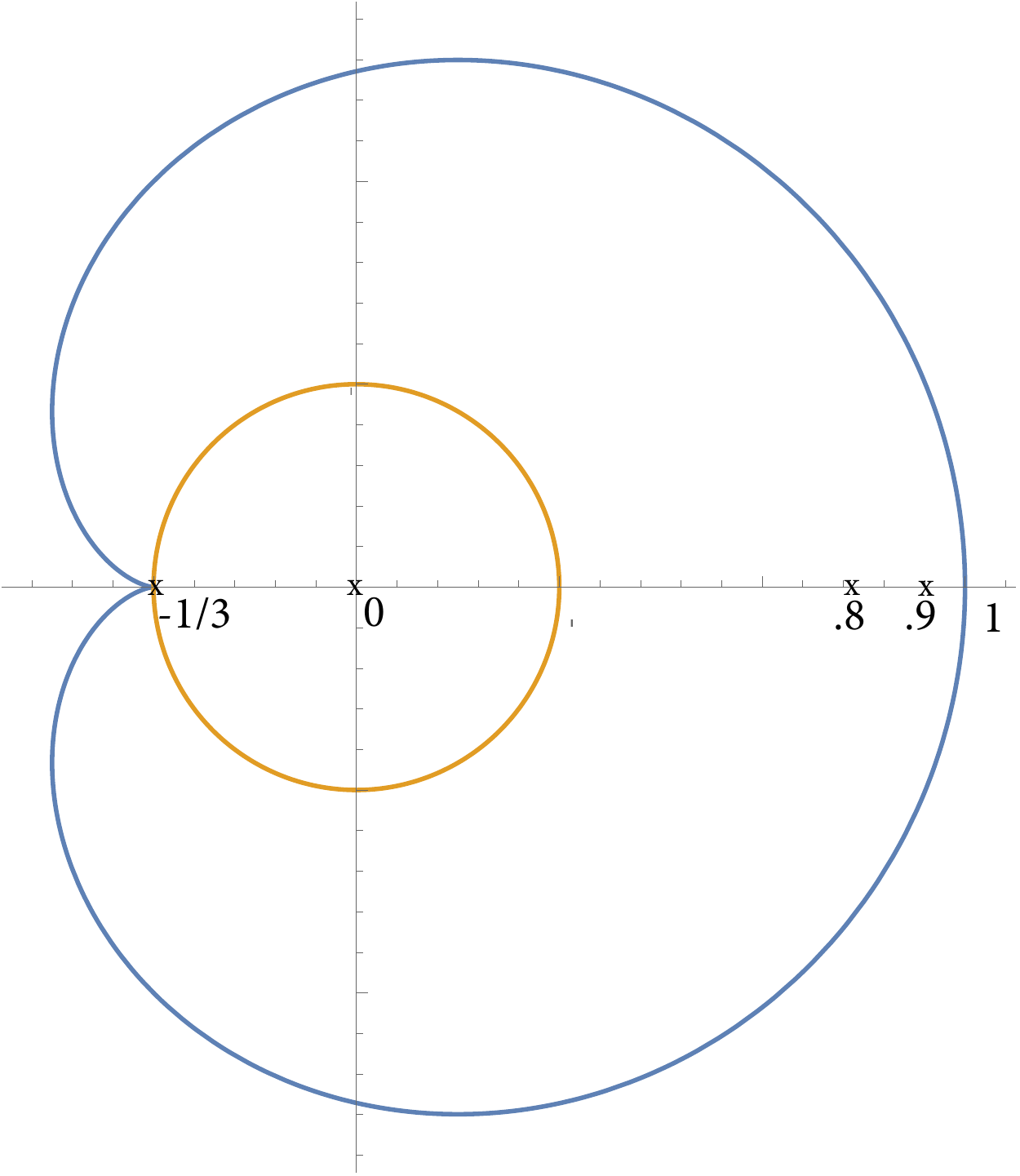}
\caption{How $(-1,1)^2$ relates to the parameter space for degree $2$ Blaschke products.}\label{fig:4}
\end{center}
\end{figure}

\begin{proof}
First, if $w=-u$, then $c=0$ and $\lambda =-B(0)=u^2$. Therefore, $w=f_{\lambda}(u)=-u$.
Now we assume that $w\neq -u$, i.e., $c\neq 0$.
By Lemma \ref{lem:real}, given $u,w$, we have $\lambda = \frac{c-B(c)}{1-cB(c)}$. 
Recalling from the proof of Theorem \ref{thm:2} the M\"obius map $p$ which sends $c$ to $0$, we have
\[ B(c) = \left ( \frac{c-w}{1-wc} \right ) \left ( \frac{c-u}{1-uc} \right ) = p(w)p(u) = -p(w)^2.\]
Therefore
\begin{equation}\label{new.eq2} 
\lambda = \frac{ c+p(w)^2}{1+cp(w)^2}.
\end{equation}
Recall from Proposition \ref{prop:2} that 
\[ c = \frac{ |wu|^2 - 1 + \sqrt{ (1-|w|^2)(1-|u|^2)|1-w\overline{u}|^2} }{\overline{wu}(w+u) - \overline{(w+u)} }.\]
Now when $u,w \in \R$, this simplifies to 
\begin{equation}\label{new.eq3}
 c = \frac{1+uw - \sqrt{(1-w^2)(1-u^2)}}{w+u}\stackrel{def}{=}\frac{1+uw - \sqrt{\Delta}}{w+u},
\end{equation}
where $\Delta=(1-w^2)(1-u^2)=1-u^2-w^2+u^2w^2$.
Next, since $p$ is a M\"obius transformation, it leaves the cross-ratio invariant. Applying this observation to 
\[ (w,u, 1/w, 1/u ) = ( p(w),-p(w),1/p(w), -1/p(w) ),\]
we see that
\[ \frac{ \Delta}{(1-uw)^2} = \left ( \frac{ 1-p(w)^2}{1+p(w)^2} \right )^2.\]
Solving for $p(w)^2$, we obtain
\begin{equation} \label{new.eq4}
p(w)^2 = \frac{(1-wu)-\sqrt{\Delta}}{(1-wu)+\sqrt{\Delta}}.
\end{equation}
It follows from (\ref{new.eq2}), (\ref{new.eq3}) and (\ref{new.eq4}) that
$$
 \lambda 
=\frac{(w+u)[(1-wu)-\sqrt{\Delta}]+[1-(wu-\sqrt{\Delta})^2]}{(w+u)[(1-wu)+\sqrt{\Delta}]+[(1-\sqrt{\Delta})^2-w^2u^2]}\stackrel{def}{=}\frac{I_1}{I_2}.
$$
Now we simplify $I_1$ and $I_2$. Indeed,
\begin{align*}
I_1+I_2& =2(w+u)(1-wu)+2-2w^2u^2+2(wu-1)\sqrt{\Delta} \\
&=2(1-wu)\left(w+u+1+wu-\sqrt{\Delta}\right),
\end{align*}
and
\begin{align*}
I_2&= w+u-w^2u-wu^2+(w+u)\sqrt{\Delta}+1-2\sqrt{\Delta}+\Delta-w^2u^2 \\
&=w(1-u^2)+u(1-w^2)+(1-w^2)+(1-u^2) -(2-w-u)\sqrt{\Delta}\\
&=(1-u^2)(w+1)+(1-w^2)(u+1)-(2-w-u)\sqrt{\Delta}\\
&=(1+u)(1+w)(1-w+1-u)-(2-w-u)\sqrt{\Delta}\\
&=(2-w-u)\left((1+u)(1+w)-\sqrt{\Delta}\right).
\end{align*}
Thus,
$$
\lambda=\frac{I_1}{I_2}=\frac{I_1+I_2}{I_2}-1=\frac{2(1-wu)}{2-w-u}-1=\frac{w+u-2wu}{2-w-u}.
$$

Rearranging in terms of $w$ we obtain
\[ w = \frac{(1+\lambda)u - 2\lambda}{2u-(1+\lambda)} =: f_{\lambda}(u).\]
It is elementary to check that the stated properties of $f_{\lambda}$ hold.
\end{proof}

\section{Acknowledgment}
The first author would like to thank Department of Mathematical Sciences, Northern Illinois University for  its hospitality during his visit.

\end{document}